\newtheorem{theorem}{Theorem}[section]
\newtheorem{proposition}[theorem]{Proposition}
\newcommand{\fl}[2]{
\xymatrix@C15pt{#1\ar[r]&#2}}
\newcommand{\flcourte}[2]{
\xymatrix@C12pt{#1\ar[r]&#2}}
\theoremstyle{definition}}
\theoremstyle{definition}}
\theoremstyle{definition}
\newtheorem{question}[theorem]{Question}
\theoremstyle{definition}\newtheorem{remark}[theorem]{Remark}}
\def\N{\ensuremath{\mathbb N}}
\newcommand{\pss}[2]{\ensuremath{{\langle #1,#2\rangle}}}
\date{\today}
\title[Operators of Read's type and the Hypercyclicity Criterion]{On the Hypercyclicity Criterion for operators of Read's type}
\author{Sophie Grivaux}
\address[S.~Grivaux]{CNRS, Univ. Lille, UMR 8524 - Laboratoire Paul Painlev\'{e}, France}
\email{sophie.grivaux@univ-lille.fr}
\urladdr{http://math.univ-lille1.fr/~grivaux/}
\keywords{47A15, 47A16}
 \subjclass{Invariant Subspace/Subset Problem, operators of Read's type, hypercyclic operators, Hypercyclicity Criterion}
 \thanks{This work was supported in part by
 the project FRONT of the French
National Research Agency (grant ANR-17-CE40-0021) and by the Labex CEMPI (ANR-11-LABX-0007-01)}
\begin{abstract}
 Let $T$ be a so-called \emph{operator of Read's type} on a (real or complex) separable Banach space, having no non-trivial invariant subset. We prove in this note that $T\oplus T$ is then hypercyclic, \emph{i.e.}\ that $T$ satisfies the Hypercyclicity Criterion.
\end{abstract}
\begin{document}
\maketitle
\section{The Invariant Subspace Problem}
Given a (real or complex) infinite-dimensional separable Banach space $X$, the Invariant Subspace Problem for $X$ asks whether every bounded operator $T$ on $X$ admits a non-trivial invariant subspace, \emph{i.e.}\ a closed subspace $M$ of $X$ with $M\neq\{0\}$ and $M\neq X$ such that $T(M)\subseteq M$. It was answered in the negative in the 80's, first by Enflo \cite{E} and then by Read \cite{R1}, who constructed examples of separable Banach spaces supporting operators without non-trivial closed invariant subspace. One of the most famous open questions in modern operator theory is the Hilbertian version of the Invariant Subspace Problem, but it is also widely open in the reflexive setting: to this day, all the known examples of operators without non-trivial invariant subspace live on non-reflexive Banach spaces.

Read provided several classes of operators on $\ell_{1}(\N)$ having no non-trivial invariant subspace \cite{R2}, \cite{R3}, \cite{R4}. In the work \cite{R5}, he gave examples of such operators on $c_{0}(\N)$ and $X=\bigoplus_{\ell_{2}}J$, the $\ell_{2}$-sum of countably many copies of the James space $J$; since $J$ is quasi reflexive (\emph{i.e.}\ has codimension $1$ in its bidual $J^{**}$), the space $X$ has the property that $X^{**}/X$ is separable. This approach was further developed in \cite{RR2}, where it was shown that whenever $Z$ is a non-reflexive separable Banach space admitting a Schauder basis, the $\ell_{p}$-sums $X=\bigoplus_{\ell_{p}}Z$ of countably many copies of $Z$ ($1\le p<+\infty$) as well as the $c_{0}$-sum $X=\bigoplus _{c_{0}}Z$ support an operator without non-trivial invariant closed subspace. Actually, these spaces support an operator without non-trivial invariant closed \emph{subset}. This generalizes a result of Read, who exhibited in \cite{R6} the first known example of an operator (on the space $\ell_1(\N)$) without
non-trivial invariant closed subset. The most recent counterexample to the Invariant Subspace Problem is given in the joint
work by Gallardo-Gutti\'{e}rez and Read \cite{GR}, which happens to be Read's last
article: the authors give an example of a quasinilpotent operator $T$ on $\ell_{1}(\N)$ with the property that whenever $f$ is the germ of a holomorphic function at $0$, the operator $f(T)$ has no non-trivial invariant closed subspace.
\par\medskip
On the other hand, many powerful techniques have been developed in the past decade to show that operators enjoying certain additional properties have non-trivial invariant subspaces. Among these, some of the most interesting have been developed by Lomonosov: his best-known result in this direction, striking for its simplicity and effectiveness, states that every operator on a Banach space commuting with a  compact operator admits a non-trivial invariant subspace \cite{L1}. Another important work of Lomonosov concerns the generalizations of the Burnside inequality obtained in \cite{L2} and \cite{L3} (see \cite{LS} for a simpler proof, relying on nonlinear arguments from \cite{L1}). The Lomonosov inequality from \cite{L2} runs as follows: 
\par\medskip
\textbf{The Lomonosov inequality.}
Let $X$ be a complex separable Banach space, and let $\mathcal{A}$ be a weakly closed subalgebra of $\mathcal{B}(X)$ with $\mathcal{A}\neq\mathcal{B}(X)$. There exist two non-zero elements $x^{*}$ and $x^{**}$ of $X^{*}$ and $X^{**}$ respectively such that $|\pss{x^{**}}{A^{*}x^{*}}|\le ||A||_{e}$ for every $A\in\mathcal{A}$.
\par\medskip
\noindent
Here $||A||_{e}$ denotes the essential norm of $A$, which is the distance of $A$ to the space of compact operators on $X$. 

This inequality is a powerful tool and has been used in many contexts to prove the existence of non-trivial invariant subsets or subspaces for certain classes of operators (see for instance \cite{AGK}, \cite{P1}, \cite{P2}, \cite{RR2}). It is one of the main results which supports the conjecture that adjoint operators on infinite-dimensional dual Banach spaces have non-trivial invariant subspaces.
\par\medskip
It would be impossible to mention here all the beautiful existence results for invariant subspaces proved in the past decade. We refer to the books \cite{RaRo} and \cite{CP} for a description of many of these. We conclude this introduction by mentioning the important work \cite{AH} of Argyros and Haydon, who constructed an example of a space $X$ on which any  operator is the sum of a multiple of the identity and a compact operator. As a consequence of the Lomonosov Theorem \cite{L1}, every operator on $X$ has a non-trivial invariant subspace. Subsequent work of Argyros and Motakis \cite{AM} shows the existence of reflexive separable Banach spaces on which any operator has a non-trivial invariant subspace. Again, the Lomonosov Theorem is brought to use in the proof, although the spaces of \cite{AM} do support operators which are not the sum of a multiple of the identity and a compact operator.

\section{Hypercyclic operators and the Hypercyclicity Criterion}
Let us now shift our point of view, and consider the Invariant Subspace and Subset Problems from the point of view of orbit behavior. It is not difficult to see that $T\in\mathcal{B}(X)$ has no non-trivial invariant subspace if and only if every non-zero vector $x\in X$ is \emph{cyclic} for $T$: the linear span in $X$ of the orbit $\{T^{n}x\,;\,n\ge 0\}$ of the vector $x$ under the action of $T$ is dense in $X$. In a similar way, $T$ has no non-trivial invariant closed subset if and only if every vector $x\neq 0$ is \emph{hypercyclic}, \emph{i.e.}\ the orbit $\{T^{n}x\,;\,n\ge 0\}$ itself is dense in $X$. An operator is called \emph{hypercyclic} if it admits a hypercyclic vector (in which case it admits a dense $G_{\delta }$ set of such vectors).
\par\medskip
The study of hypercyclicity and related notions fits into the framework of \emph{linear dynamics}, which is the study of the dynamical systems given by the action of a bounded operator on a separable Banach space. It has been the object of many investigations in the past years, as testified by the two books \cite{GEP} and \cite{BM} which retrace important recent developments in this direction. One of the main open problems  in hypercyclicity theory was solved in 2006  by De la Rosa and Read \cite{DR}. They constructed an example of a hypercyclic operator $T$ on a Banach space $X$ such that the direct sum $T\oplus T$ of $T$ with itself on $X\oplus X$ is not hypercyclic. In other words, although there exists $x\in X$ with the property that for every 
$u\in X$ and every $\varepsilon >0$, there exists $n\ge 0$ such that $||T^{n}x-u||<\varepsilon $, there is no pair $(x,y)$ of vectors of $X$ such that for every $(u,v)\in X\times X$ and every $\varepsilon >0$, there exists $n\ge 0$ which simultaneously satisfies $||T^{n}x-u||<\varepsilon $ and $||T^{n}y-v||<\varepsilon $. Further examples of such operators (hypercyclic but not \emph{topologically weakly mixing}) were constructed by Bayart and Matheron in \cite{BM0} on many classical spaces such as the spaces $\ell_{p}(\N)$, $1\le p<+\infty$ and $c_{0}(\N)$.
\par\medskip
The question of the existence of hypercyclic operators $T$ such that $T\oplus T$ is not hypercyclic  arose in connection with the so-called Hypercyclicity Criterion, which is certainly the most effective tool for proving that a given operator is hypercyclic. Despite its somewhat intricate form, which we recall below, it is very easy to use.
\par\medskip

\textbf{The Hypercyclicity Criterion.} Let $T\in\mathcal{B}(X)$. Suppose that there exist two dense subsets $D$ and $D'$ of $X$, a strictly increasing sequence $(n_{k})_{k\ge 0}$ of integers, and a sequence $(S_{n_{k}})_{k\ge 0}$ of maps from $D'$ into $X$ satisfying the following three assumptions:
\begin{enumerate}
 \item [(i)] ${T^{n_{k}}x}\rightarrow{0}$ as ${k}\rightarrow{+\infty}$ for every $x\in D$;
 \item[(ii)] ${S_{n_{k}}y}\rightarrow{0}$ as ${k}\rightarrow{+\infty}$ for every $y\in D'$;
 \item[(iii)] ${T^{n_{k}}S_{n_{k}}y}\rightarrow{y}$ as ${k}\rightarrow{+\infty}$ for every $y\in D'$.
\end{enumerate}
Then $T$ is hypercyclic, as well as $T\oplus T$.

\par\medskip
The Hypercyclicity Criterion admits many equivalent formulations, which we will not detail here. An important result, due to B\`{e}s and Peris \cite{BePe}, shows that $T$ satisfies the Hypercyclicity Criterion if and only if $T\oplus T$ is hypercyclic. This criterion is thus deeper than one may think at first glance. Many sufficient conditions implying the Hypercyclicity Criterion have been proved over the years, always in the spirit that ``hypercyclicity plus some regularity assumption implies the Hypercyclicity Criterion'', see \cite[Ch.\,3]{GEP}. For instance, hypercyclicity plus the existence of a dense set of vectors with bounded orbit implies that the Hypercyclicity Criterion is satisfied (\cite{G}, see also \cite[Sec.\,5]{GMM} for generalizations). This phenomenon is well-known in dynamics: an irregular behavior of some orbits (density) combined with the regular behavior of some other orbits (typically, periodicity) implies chaos. See for instance \cite{BBCDS}.

\section{Operators without non-trivial invariant subsets and the Hypercyclicity Criterion}
In the light of this observation (and also of the fact that Read had a hand in the construction of operators without non-trivial invariant subsets, as well as in the construction of hypercyclic operators which are not weakly topologically mixing!), the following question comes naturally to mind:

\begin{question}\label{Question 1}
 Does there exist a bounded operator $T$ on a Banach space $X$ which simultaneously satisfies
 \begin{enumerate}
  \item [(a)] $T$ has no non-trivial invariant subset, that is, all non-zero vectors $x\in X$ are hypercyclic for $T$;
  \item[(b)] $T\oplus T$ is not hypercyclic as an operator on $X\oplus X$?
 \end{enumerate}
\end{question}

 One may be tempted to guess that operators whose set of hypercyclic vectors is too large are somehow less likely to satisfy the Hypercyclicity Criterion
than others (since the usual regularity assumptions may be missing), or one may be inclined to believe that such operators should  indeed satisfy the Criterion (as the set of hypercyclic vectors is so large, there is every chance that there exists a pair $(x,y)$ of vectors of $X$ whose orbits are independent enough for $x\oplus y$ to have a dense orbit under the action of $T\oplus T$). Both arguments are plausible, and it is difficult to get a deeper intuition in Question \ref{Question 1}, besides saying that it is probably hard!
\par\medskip

Our aim in this note is to prove the following modest result, which shows that all the known examples of operators  without non-trivial invariant  closed subset do satisfy the Hypercyclicity Criterion. 

\begin{theorem}\label{Th 2}
 Let $T$ be an operator of Read's type, acting on a (real or complex) separable Banach space, and having no non-trivial invariant subset. Then $T\oplus T$ is hypercyclic, \emph{i.e.}\ $T$ satisfies the Hypercyclicity Criterion.
\end{theorem}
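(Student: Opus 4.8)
The plan is to verify the Hypercyclicity Criterion in the exact form recalled above, reading the two dense sets, the sequence $(n_{k})$ and the maps $S_{n_{k}}$ off the structure of $T$. First note that, since $T$ has no non-trivial invariant closed subset, \emph{every} non-zero vector is hypercyclic; in particular $T$ is hypercyclic, and if $x$ is hypercyclic then so is $T^{m}x$ for every $m\ge0$ (if $T^{m}x=0$ the orbit of $x$ would be finite, hence not dense). This makes assumption (i) almost free: fix a hypercyclic vector $x_{0}$, set $D:=\{T^{m}x_{0}\,;\,m\ge0\}$, which is dense, and pick a strictly increasing sequence $(n_{k})$ with $T^{n_{k}}x_{0}\to0$ (such a sequence exists because $0$ lies in the closure of the orbit of $x_{0}$ but not in the orbit itself). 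Then $T^{n_{k}}(T^{m}x_{0})=T^{m}(T^{n_{k}}x_{0})\to0$ for each $m$, so (i) holds along any such $(n_{k})$. The point is that the \emph{same} $(n_{k})$ must also serve (ii) and (iii), so $(n_{k})$ cannot be chosen arbitrarily: it has to be adapted to the arithmetic of Read's construction.

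This is where the defining features of an operator of Read's type are used. Such a $T$ comes with a basis-type system $(e_{i})$ (or acts on one of the classical sequence spaces with its unit vector basis), a partition of the indices into consecutive working intervals, and a distinguished ``engine'' vector whose orbit is forced to shadow, at prescribed stage times, a fixed dense sequence of vectors, the shadowing being realised through connecting maps with very large norms. Two consequences of this machinery are what we need. \emph{First}, there is a strictly increasing sequence $(n_{k})$ of stage times along which $T^{n_{k}}z\to0$ for every $z$ in a fixed dense linear subspace of $X$, so that (i) holds along this structural sequence. \emph{Second}, and this is the heart of the matter, the delivery mechanism can be run with inputs of arbitrarily small norm: there are a dense subset $D'$ of $X$ and, for each $y\in D'$ and each $k$, a vector $S_{n_{k}}y$ with $\|S_{n_{k}}y\|\to0$ and $T^{n_{k}}(S_{n_{k}}y)\to y$ as $k\to\infty$. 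Indeed, the large connecting coefficients that act over $n_{k}$ steps let a minute vector, placed far out along the basis at the entrance of the appropriate working interval, be inflated into an arbitrarily good approximation of any prescribed target from the dense family, and shrinking the input vector while pushing its support further out costs nothing.

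Granting these two features, the conclusion is immediate: take $D$ as above, $D'$ as in the second feature (one may also arrange $D=D'$, the Criterion permitting distinct dense sets in any case), $(n_{k})$ the structural sequence of stage times, and the maps $S_{n_{k}}$ just described; then (i) holds by the first feature and (ii), (iii) by the second, so the Hypercyclicity Criterion applies and $T\oplus T$ is hypercyclic. Equivalently one can phrase this as a direct check that $T$ is topologically weakly mixing: given non-empty open $U,V$ and a neighbourhood $W$ of $0$, a finitely supported $u\in U$ satisfies $T^{n_{k}}u\in W$ for large $k$ by the first feature, while the second feature produces, for each large $k$, a vector $w\in W$ with $T^{n_{k}}w\in V$; thus $T^{n_{k}}(U)\cap W\neq\emptyset$ and $T^{n_{k}}(W)\cap V\neq\emptyset$ for the same $n_{k}$, which by B\`{e}s--Peris is again the Hypercyclicity Criterion.

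The main obstacle is precisely the second feature: that the approximating vectors $S_{n_{k}}y$ may be taken with norms tending to $0$ while the times $n_{k}$ stay locked to the stage sequence along which $T^{n_{k}}\to0$ on a dense set. Establishing it means going back into the construction and tracking the norms of the intermediate vectors along an orbit, checking that over a single block of $n_{k}$ steps a vector of norm $\varepsilon$ supported near the start of the $k$-th working interval is carried within $\varepsilon$ of the prescribed target, with $\varepsilon\to0$ as $k\to\infty$; the exact bookkeeping depends on the axioms one adopts for an operator of Read's type, but in all known instances it is the same large-coefficient mechanism responsible for hypercyclicity, now used in a slightly more quantitative way.
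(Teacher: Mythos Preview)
Your outline has a genuine gap: the entire argument rests on the two ``features'' you postulate, and neither is proved. You assert that there is a structural sequence $(n_{k})$ of stage times along which $T^{n_{k}}z\to 0$ on a dense subspace \emph{and} along which the delivery mechanism produces $S_{n_{k}}y$ with $\|S_{n_{k}}y\|\to 0$ and $T^{n_{k}}S_{n_{k}}y\to y$. But in Read's constructions the large connecting coefficients act at very specific indices (the beginnings of the (c)-working intervals $c_{n}$), and at those same indices there is no reason for $T^{c_{n}}$ to contract on a dense set; in fact the whole point of the (c)-fan is that $T^{c_{n}}e_{0}$ is forced to be close to \emph{arbitrary} polynomial images $p_{n}(T)e_{0}$, which may have large norm. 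Your final paragraph concedes that the ``second feature'' requires going back into the bookkeeping, but you never carry this out, and the synchronisation of contraction and expansion along a single sequence of pure powers $T^{n_{k}}$ is precisely the non-obvious point.

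The paper's proof avoids this difficulty by \emph{not} trying to find such an $(n_{k})$. Instead it uses the polynomial form of the criterion (Proposition~\ref{Prop 4}), which allows $q_{k}(T)$ rather than a pure power $T^{n_{k}}$. The key idea is to pick $(n_{k})$ so that $T^{c_{n_{k}}}e_{0}$ approximates $4^{k}e_{0}$ --- that is, one exploits \emph{expansion}, not contraction --- then split $c_{n_{k}}=i_{n_{k}}+j_{n_{k}}$ with $i_{n_{k}}\approx j_{n_{k}}\approx c_{n_{k}}/2$. Because the midpoint lies in the long lay-off interval preceding the (c)-interval, the explicit lay-off formula gives $\|T^{i_{n_{k}}}e_{0}\|\to 1$ and $\|T^{j_{n_{k}}}e_{0}\|\to 1$. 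Setting $w_{k}=2^{-k}T^{i_{n_{k}}}e_{0}$ and $q_{k}(T)=2^{-k}T^{j_{n_{k}}}$ then yields $w_{k}\to 0$, $q_{k}(T)e_{0}\to 0$, and $q_{k}(T)w_{k}=4^{-k}T^{c_{n_{k}}}e_{0}\to e_{0}$. The scaling factors $2^{-k}$, not any contraction of $T$ itself, are what make the first two limits vanish; the built-in expansion $T^{c_{n_{k}}}e_{0}\approx 4^{k}e_{0}$ then cancels the $4^{-k}$ in the third. This halving-and-rescaling trick is the missing idea in your proposal.
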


What are \emph{operators of Read's type}? We group under this rather vague denomination all the operators which satisfy certain structure properties, appearing in the constructions carried out by Read, and common to almost all the operators which have no (or few) non-trivial invariant subspaces or subsets. All the operators constructed by Read in \cite{R1, R2, R3, R4, R5, R6}, as well as the operators from \cite{RR1} and \cite{RR2}, fall within this category  (Enflo's examples are of a different type). See \cite[Sec.\,2]{RR2} for an informal description of the properties of operators of Read's type. As will be seen in Section \ref{Sec 4} below, only two of the properties of operators of Read's type are involved in the proof of Theorem \ref{Th 2}, so that it could potentially be applied to much wider classes of operators.

\section{Proof of Theorem \ref{Th 2}}\label{Sec 4}
We will carry out this proof in the context of \cite{RR2}, and will in particular use the notation introduced in \cite[Sec.\,2.2]{RR2}. Read's type constructions involve two sequences $(f_{j})_{j\ge 0}$ and $(e_{j})_{j\ge 0}$ of vectors, defined inductively. The sequence $(f_{j})_{j\ge 0}$ is a Schauder basis of the space $X$. When $X$ is a classical space like $\ell_{1}(\N)$ or $c_{0}(\N)$, $(f_{j})_{j\ge 0}$ is simply the canonical basis of $X$. The vectors $e_{j}$, $j\ge 0$, are defined in such a way that  $e_{0}=f_{0}$ and $\textrm{span}\,[e_{0},\dots,e_{j}]=\textrm{span}\,[f_{0},\dots,f_{j}]$ for every $j\ge 1$. They are thus linearly independent and span a dense subspace of $X$. The operator $T$ is then defined by setting $Te_{j}=e_{j+1}$ for every $j\ge 0$; this definition makes sense since the vectors $e_{j}$ are linearly independent. The whole difficulty of the construction is to define the vectors $e_{j}$ in such a way that $T$ extends to a bounded operator on $X$, and that $T$ has no non-trivial invariant subspace (or subset). Observe that $T^{j}e_{0}=e_{j}$ for every $j\ge 0$, \emph{i.e.}\ that $(e_{j})_{j\ge 0}$ is the orbit of $e_{0}$ under the action of $T$. In particular, $e_{0}$ is by construction a cyclic vector for $T$.
\par\medskip
The vectors $e_{j}$ are defined differently, depending on whether $j$ belongs to what is called in \cite{RR1} or \cite{RR2} a \emph{working interval} or a \emph{lay-off interval}. Lay-off intervals lie between the working intervals, and if $I=[\nu+1,\nu+l]$ is such a lay-off interval of length $l$, $e_{j}$ is defined for $j\in I$ as 
\[
e_{j}=2^{-\frac{1}{\sqrt{l}}(\frac{l}{2}+\nu+1-j)}f_{j}
\]
and $Tf_j=2^{-\frac{1}{\sqrt{l}}}f_{j+1}$  for every $\nu+1\le j<\nu+l$.
\par\medskip
The working intervals are of three types: (a), (b), and (c).
The (c)-working intervals appear only in the case where one is interested in constructing operators without non-trivial invariant subset. These are the only working intervals which will be relevant here. One of their roles is to ensure that $e_{0}$ is not only cyclic, but hypercyclic for $T$. There is at each step $n$ of the construction a whole family of (c)-working intervals, which is called in \cite{RR1} and \cite{RR2} the (c)-fan. The first of these intervals has the form $[c_{1,n},c_{1,n}+\nu _{n}]$, where $\nu _{n}$ is the index corresponding to the end of the last (b)-working interval constructed at step $n$, and $c_{1,n}$ is extremely large with respect to $\nu _{n}$. In order to simplify the notation, we set $c_{n}=c_{1,n}$ for every $n\ge 0$. Thus $[\nu _{n}+1,c_{n}-1]$ is the lay-off interval which precedes the first (c)-working interval. For $j\in[c_{n},c_{n}+\nu _{n}]$, the vector $e_{j}$ is defined as
\[
e_{j}=\gamma _{n}f_{j}+p_{n}(T)e_{0}
\]
where $\gamma _{n}>0$ is extremely small and $p_{n}$ is a polynomial with suitably controlled degree, and such that $|p_{n}|\le 2$ (the polynomial $p_{n}$ is denoted by $p_{1,n}$ in \cite{RR1} and \cite{RR2}; again we simplify the notation). Here the modulus $|p|$ of a polynomial $p$ is defined as the sum of the moduli of its coefficients. 

Thus, in particular, $e_{c_{n}}=T^{c_{n}}e_{0}=\gamma _{n}f_{c_{n}}+p_{n}(T)e_{0}$ and 
$||e_{c_{n}}-p_{n}(T)e_{0}||=\gamma _{n}$. The family $(p_{n})_{n\ge 1}$ is chosen in such a way that for every polynomial $p$ with $|p|\le 2$ and every $\varepsilon >0$, there exists $n\ge 1$ such that $||p_{n}(T)e_{0}-p(T)e_{0}||<\varepsilon $. Hence there exists for every polynomial $p$ with $|p|\le 2$ and every $\varepsilon >0$ an integer $n$ such that $||T^{c_{n}}e_{0}-p(T)e_{0}||<\varepsilon $.
\par\medskip
An important observation is that this property actually extends to \emph{all} polynomials $p$, regardless of the size of their moduli $|p|$. The simple argument is given already in the proof of \cite[Th.\,1.1]{RR1} and in \cite[Sec.\,3.1]{RR2}, but we recall it briefly for the sake of completeness: let $p$ be any polynomial, and fix $\varepsilon >0$. Let $j$ be an integer such that $|p|\le 2^{j}$. Then we know that there exists an integer $n_{1}$ such that $||T^{c_{n_{1}}}e_{0}-2^{-j}p(T)e_{0}||<\varepsilon 2^{-2j}$. There also exists an integer $n_{2}$ such that
$||T^{c_{n_{2}}}e_{0}-2T^{c_{n_{1}}}e_{0}||<\varepsilon 2^{-(2j-1)}$. Then it follows that 
$||T^{c_{n_{2}}}e_{0}-2^{-(j-1)}p(T)e_{0}||<\varepsilon 2^{-2(j-1)}$. Continuing in this fashion, we obtain that there exists an integer $n_{j}$ such that $||T^{c_{n_{j}}}e_{0}-p(T)e_{0}||<\varepsilon $, which proves our claim: there exists for every polynomial $p$ and every $\varepsilon >0$ an integer $n$ such that $||T^{c_{n}}e_{0}-p(T)e_{0}||<\varepsilon $.
\par\medskip
Before moving over to the proof of Theorem \ref{Th 2}, we recall the following result from \cite{G}, which provides a useful sufficient condition for the Hypercyclicity Criterion to be satisfied:
\begin{theorem}[\cite{G}]\label{Th 3}
Let $T$ be a bounded operator on a separable Banach space $X$. Suppose that for every pair $(U,V)$ of non-empty open subsets of $X$, and for every neighborhood $W$ of $0$, there exists a polynomial $p$ such that  $p(T)(U)\cap W$ and $p(T)(W)\cap V$ are simultaneously non-empty. If $T$ is hypercyclic, then $T$ satisfies in fact the Hypercyclicity Criterion.
\end{theorem}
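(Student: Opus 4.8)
The plan is to reduce the Hypercyclicity Criterion to a purely topological ``weak mixing'' condition and then to exploit hypercyclicity in order to replace the polynomial $p(T)$ occurring in the hypothesis by a genuine power $T^{n}$. Recall that, by the theorem of B\`{e}s and Peris quoted above, $T$ satisfies the Hypercyclicity Criterion if and only if $T\oplus T$ is hypercyclic, that is, if and only if $T$ is weakly mixing. I would invoke the standard characterisation of weak mixing for linear operators (see \cite{GEP} or \cite{BM}): an operator $T$ is weakly mixing if and only if for every pair $(U,V)$ of non-empty open subsets of $X$ and every neighbourhood $W$ of $0$, there exists an integer $n\ge 0$ such that $T^{n}(U)\cap W\neq\emptyset$ and $T^{n}(W)\cap V\neq\emptyset$. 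Since the assumption that $T$ is hypercyclic is part of our hypotheses, it thus suffices to verify this last ``$T^{n}$'' condition for every such triple $(U,V,W)$.

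To do so, I fix non-empty open sets $U,V$ and a neighbourhood $W$ of $0$; shrinking $W$, I may assume it to be open, and it is enough to treat this smaller $W$. Applying the hypothesis produces a polynomial $p$ with $p(T)(U)\cap W\neq\emptyset$ and $p(T)(W)\cap V\neq\emptyset$. I first fix $w_{0}\in W$ with $p(T)w_{0}\in V$. Next, the set $U\cap p(T)^{-1}(W)$ is open (as $p(T)$ is continuous and $W$ is open) and non-empty (since $p(T)(U)\cap W\neq\emptyset$); because $T$ is hypercyclic, its hypercyclic vectors form a dense set, so I may choose a hypercyclic vector $x$ lying in $U\cap p(T)^{-1}(W)$, i.e.\ with $x\in U$ and $p(T)x\in W$. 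Now both conditions ``$z\in W$'' and ``$p(T)z\in V$'' are open conditions satisfied at $z=w_{0}$, so there is an open neighbourhood $O$ of $w_{0}$ with $O\subseteq W$ and $p(T)(O)\subseteq V$. Since the orbit $\{T^{n}x:n\ge 0\}$ is dense, I can pick $n$ with $T^{n}x\in O$. This single $n$ does the job: on the one hand $x\in U$ and $T^{n}x\in O\subseteq W$, so $T^{n}(U)\cap W\neq\emptyset$; on the other hand, setting $w:=p(T)x\in W$ and using that $p(T)$ commutes with $T$, we get $T^{n}w=p(T)(T^{n}x)\in p(T)(O)\subseteq V$, so $T^{n}(W)\cap V\neq\emptyset$. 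Hence $T$ is weakly mixing, and therefore satisfies the Hypercyclicity Criterion.

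The only genuine obstacle here is the passage from the polynomial $p(T)$ furnished by the hypothesis to an honest iterate $T^{n}$, since the Hypercyclicity Criterion (via weak mixing) is a statement about powers of $T$ and not about arbitrary polynomials in $T$. The key device that overcomes it is the commutation relation $T^{n}p(T)=p(T)T^{n}$ together with the density of the orbit of a \emph{hypercyclic} vector chosen inside $U$: this is precisely what allows one and the same time $n$ to simultaneously carry a point of $U$ into $W$ and a point of $W$ into $V$. I would expect the verification that $W$ may be taken open, and the bookkeeping ensuring that the neighbourhood $O$ can be chosen to meet both open constraints at once, to be routine; the conceptual heart of the argument is the choice of the hypercyclic vector $x\in U$ with $p(T)x\in W$ and the use of commutation.
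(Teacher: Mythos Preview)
The paper does not actually prove Theorem~\ref{Th 3}; it is quoted as a known result from \cite{G} and merely restated (together with its reformulation, Proposition~\ref{Prop 4}) in order to apply it in the proof of Theorem~\ref{Th 2}. So there is no in-paper argument to compare your proposal against.

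That said, your argument is correct and is essentially the proof one finds in the literature. The reduction to the ``three open sets'' characterisation of topological weak mixing is exactly the right move, and the heart of the matter is indeed the passage from $p(T)$ to an iterate $T^{n}$. Your trick---choosing a hypercyclic vector $x\in U\cap p(T)^{-1}(W)$, steering $T^{n}x$ into a small neighbourhood $O\subseteq W$ of $w_{0}$ on which $p(T)$ still maps into $V$, and then using the commutation $T^{n}p(T)=p(T)T^{n}$ so that the single witness $w=p(T)x\in W$ is carried into $V$---is precisely the mechanism behind the result in \cite{G}. The bookkeeping steps you flag (shrinking $W$ to be open, existence of $O$) are routine, as you say.
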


Theorem \ref{Th 3} can be rewritten in somewhat more concrete terms as:
\begin{proposition}\label{Prop 4}
 Let $T$ be a hypercyclic operator on a separable Banach space $X$, and let  $x_{0}$ be a cyclic vector for $T$. If there exist a sequence $(q_{k})_{k\ge 0}$ of polynomials and a sequence $(w_{k})_{k\ge 0 }$ of vectors of $X$ such that
 \[
{q_{k}(T)x_{0}}\rightarrow{0,}\quad {w_{k}}\rightarrow{0,}\quad \textrm{and}\quad 
{q_{k}(T)w_{k}}\rightarrow{x_{0}}
\]
as ${k}\rightarrow{+\infty,}$ then $T$ satisfies the Hypercyclicity Criterion.
\end{proposition}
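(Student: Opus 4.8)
The plan is to deduce Proposition \ref{Prop 4} directly from Theorem \ref{Th 3}: since $T$ is assumed hypercyclic, it suffices to check that for every pair $(U,V)$ of non-empty open subsets of $X$ and every neighbourhood $W$ of $0$ there is a polynomial $p$ with $p(T)(U)\cap W\neq\emptyset$ and $p(T)(W)\cap V\neq\emptyset$. So I would fix such a triple $(U,V,W)$ and look for $p$ of the form $p=q_{k}$ with $k$ large.

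First I would use the cyclicity of $x_{0}$: the linear span of the orbit $\{T^{n}x_{0}\,;\,n\ge 0\}$ is dense, i.e.\ $\{r(T)x_{0}\,;\,r\text{ a polynomial}\}$ is dense in $X$, so one can fix polynomials $r$ and $s$ with $r(T)x_{0}\in U$ and $s(T)x_{0}\in V$. The point is that this reduces the problem to moving the single vector $x_{0}$ around, which is exactly what the three limit conditions control.

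Now take $p=q_{k}$. On the $U$-side, since polynomials in $T$ commute and $r(T)$ is bounded, $q_{k}(T)\bigl(r(T)x_{0}\bigr)=r(T)\bigl(q_{k}(T)x_{0}\bigr)\to 0$ as $k\to\infty$ by hypothesis; hence $q_{k}(T)\bigl(r(T)x_{0}\bigr)\in W$ for $k$ large, giving $q_{k}(T)(U)\cap W\neq\emptyset$. On the $V$-side, set $w_{k}'=s(T)w_{k}$; since $w_{k}\to 0$ and $s(T)$ is bounded, $w_{k}'\to 0$, so $w_{k}'\in W$ for $k$ large, and $q_{k}(T)w_{k}'=s(T)\bigl(q_{k}(T)w_{k}\bigr)\to s(T)x_{0}\in V$, so $q_{k}(T)w_{k}'\in V$ for $k$ large because $V$ is open. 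Choosing $k$ large enough to satisfy all of these at once, $p=q_{k}$ works.

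Since the triple $(U,V,W)$ was arbitrary, Theorem \ref{Th 3} applies and yields that $T$ satisfies the Hypercyclicity Criterion. I do not expect a genuine obstacle in this argument; it is essentially bookkeeping, and the only two things to keep track of are that all the operators $r(T)$, $s(T)$, $q_{k}(T)$ commute, and that cyclicity of $x_{0}$ is precisely what allows one to hit the prescribed open sets $U$ and $V$ starting from the orbit of $x_{0}$.
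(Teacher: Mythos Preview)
Your argument is correct and is exactly the verification the paper has in mind: the paper introduces Proposition~\ref{Prop 4} as Theorem~\ref{Th 3} ``rewritten in somewhat more concrete terms'' without spelling out the details, and your reduction via cyclicity of $x_{0}$ together with commutativity and continuity of the operators $r(T)$, $s(T)$ is precisely how one passes from the concrete hypotheses to the three-open-sets condition of Theorem~\ref{Th 3}.
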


We are now ready for the proof of Theorem \ref{Th 2}.

\begin{proof}[Proof of Theorem \ref{Th 2}]
Let $(n_{k})_{k\ge 0}$ be a strictly increasing sequence of integers such that 
\[
||T^{c_{n_{k}}}e_{0}-4^{k}e_{0}||<1\quad \textrm{for every } k\ge 1.
\]
 Write $c_{n_{k}}$ as $c_{n_{k}}=i_{n_{k}}+j_{n_{k}}$ where $i_{n_{k}}=\lfloor c_{n_{k}}/2\rfloor$ and $j_{n_{k}}=c_{n_{k}}-\lfloor c_{n_{k}}/2\rfloor$.
 \par\medskip
 Since $c_{n}$ is extremely large with respect to $\nu _{n}$ at each step $n$ of the construction of $T$, $i_{n_{k}}$ belongs to the lay-off interval $[\nu _{n_{k}}+1,c_{n_{k}}-1]$ for every $k$. Thus
 \[
||T^{i_{n_{k}}}e_{0}||=||e_{i_{n_{k}}}||=2^{-\frac{1}{\sqrt{c_{n_{k}}-\nu _{n_{k}}-1}}\bigl(\frac{1}{2}(c_{n_{k}}-\nu _{n_{k}}-1)+\nu_{n_{k}}+1-i_{n_{k}}\bigr)}
\]
and ${||T^{i_{n_{k}}}e_{0}||}\rightarrow{1}$ as ${k}\rightarrow{+\infty.}$ Exactly the same argument shows that ${||T^{j_{n_{k}}}e_{0}||}\rightarrow{1}$ as ${k}\rightarrow{+\infty.}$ 

Set $w_{k}=2^{-k}T^{i_{n_{k}}}e_{0}$ and $q_k(T)=2^{-k}T^{j_{n_{k}}}$ for every $k\ge 1$. Then ${w_{k}}\rightarrow{0}$ and $q_k(T)e_{0}\rightarrow{0.}$  Moreover,
$q_k(T)w_{k}=4^{-k}T^{c_{n_{k}}}e_{0}\rightarrow{e_{0}.}$
 Since the vector $e_{0}$ is hypercyclic for $T$, the assumptions of Proposition \ref{Prop 4} are in force, and $T$ satisfies the Hypercyclicity Criterion.
\end{proof}

\begin{remark}
The same argument shows that the hypercyclic operators from \cite{RR1}, which have few non-trivial invariant subsets but still do have some non-trivial invariant subspaces, also satisfy the Hypercyclicity Criterion. The fact that the operators of Read's type on a separable infinite-dimensional complex Hilbert space $H$ from \cite{RR1} have a non-trivial invariant subspace relies on the Lomonosov inequality from \cite{L2}:
there exists a pair $(x,y)$ of non-zero vectors of $H$ such that $|\pss{T^n x}{y}|\le ||T^n||_e$ for every integer $n$. Since the operators $T$ are by construction compact perturbations of power-bounded (forward) weighted shifts with respect to a fixed Hilbertian basis $(f_j)_{j\ge 0}$ of $H$,   $\sup_{n\ge 0}|\pss{T^n x}{y}|<+\infty$, and the closure of the orbit of $x$ under the action of $T$ is a non-trivial closed invariant subset for $T$.
Moreover, the operator $T$ has the following
property (called (P1) in \cite{RR2}): all closed invariant subsets of $T$ are
actually closed invariant subspaces. Therefore, $T$ has a non-trivial closed invariant subspace. See \cite[Sec.\,7.2]{RR2} for details and more general results.
\end{remark}

We conclude this note with the following question, which may help to shed a light on Question \ref{Question 1}:

\begin{question}\label{Question 2}
Let $T$ be one of the operators from \cite{DR} or \cite{BM0} which are hypercyclic but do not satisfy the Hypercyclicity Criterion. What can be said about the size of the set $HC(T)$ of hypercyclic vectors for $T$? Is it ``large'', or rather ``small''? Is its complement Haar-null, for instance?  
\end{question}

\par\bigskip
\textbf{Acknowledgement:} I am grateful to Gilles Godefroy and Quentin Menet for interesting comments on a first version of this note.

\begin{bibdiv}
  \begin{biblist}

\bib{AH}{article}{
   author={Argyros, Spiros A.},
   author={Haydon, Richard G.},
   title={A hereditarily indecomposable $\scr L_\infty$-space that solves
   the scalar-plus-compact problem},
   journal={Acta Math.},
   volume={206},
   date={2011},
   number={1},
   pages={1--54},
   issn={0001-5962},
   review={\MR{2784662}},
   
}

\bib{AM}{article}{
   author={Argyros, Spiros A.},
   author={Motakis, Pavlos},
   title={A reflexive hereditarily indecomposable space with the hereditary
   invariant subspace property},
   journal={Proc. Lond. Math. Soc. (3)},
   volume={108},
   date={2014},
   number={6},
   pages={1381--1416},
   issn={0024-6115},
   review={\MR{3218313}},
   
}

\bib{AGK}{article}{
   author={Atzmon, A.},
   author={Godefroy, G.},
   author={Kalton, N. J.},
   title={Invariant subspaces and the exponential map},
   journal={Positivity},
   volume={8},
   date={2004},
   number={2},
   pages={101--107},
   issn={1385-1292},
   review={\MR{2097081}},
}

\bib{BBCDS}{article}{
   author={Banks, J.},
   author={Brooks, J.},
   author={Cairns, G.},
   author={Davis, G.},
   author={Stacey, P.},
   title={On Devaney's definition of chaos},
   journal={Amer. Math. Monthly},
   volume={99},
   date={1992},
   number={4},
   pages={332--334},
   issn={0002-9890},
   review={\MR{1157223}},
   
}

\bib{BM0}{article}{
   author={Bayart, Fr\'{e}d\'{e}ric},
   author={Matheron, \'{E}tienne},
   title={Hypercyclic operators failing the hypercyclicity criterion on
   classical Banach spaces},
   journal={J. Funct. Anal.},
   volume={250},
   date={2007},
   number={2},
   pages={426--441},
   issn={0022-1236},
   review={\MR{2352487}},
   
}
\bib{BM}{book}{
   author={Bayart, Fr\'{e}d\'{e}ric},
   author={Matheron, \'{E}tienne},
   title={Dynamics of linear operators},
   series={Cambridge Tracts in Mathematics},
   volume={179},
   publisher={Cambridge University Press, Cambridge},
   date={2009},
   pages={xiv+337},
   isbn={978-0-521-51496-5},
   review={\MR{2533318}},
   
}

\bib{BePe}{article}{
   author={B\`es, Juan},
   author={Peris, Alfredo},
   title={Hereditarily hypercyclic operators},
   journal={J. Funct. Anal.},
   volume={167},
   date={1999},
   number={1},
   pages={94--112},
   issn={0022-1236},
   review={\MR{1710637}},
   
}

\bib{CP}{book}{
   author={Chalendar, Isabelle},
   author={Partington, Jonathan R.},
   title={Modern approaches to the invariant-subspace problem},
   series={Cambridge Tracts in Mathematics},
   volume={188},
   publisher={Cambridge University Press, Cambridge},
   date={2011},
   pages={xii+285},
   isbn={978-1-107-01051-2},
   review={\MR{2841051}},
}

\bib{P2}{article}{
   author={Chevreau, B.},
   author={Jung, I. B.},
   author={Ko, E.},
   author={Pearcy, C.},
   title={Operators that admit a moment sequence. II},
   journal={Proc. Amer. Math. Soc.},
   volume={135},
   date={2007},
   number={6},
   pages={1763--1767},
   issn={0002-9939},
   review={\MR{2286086}},
   
}

\bib{DR}{article}{
   author={de la Rosa, Manuel},
   author={Read, Charles},
   title={A hypercyclic operator whose direct sum $T\oplus T$ is not
   hypercyclic},
   journal={J. Operator Theory},
   volume={61},
   date={2009},
   number={2},
   pages={369--380},
   issn={0379-4024},
   review={\MR{2501011}},
}

\bib{E}{article}{
   author={Enflo, Per},
   title={On the invariant subspace problem for Banach spaces},
   journal={Acta Math.},
   volume={158},
   date={1987},
   number={3-4},
   pages={213--313},
   issn={0001-5962},
   review={\MR{892591}},
   
}

\bib{P1}{article}{
   author={Foias, Ciprian},
   author={Pearcy, Carl},
   author={Smith, Lidia},
   title={Weak orbit-transitivity on Hilbert space},
   journal={Acta Sci. Math. (Szeged)},
   volume={76},
   date={2010},
   number={1-2},
   pages={155--164},
   issn={0001-6969},
   review={\MR{2668412}},
}

\bib{GR}{article}{
   author={Gallardo-‐Guti\'{e}rrez, Eva A.},
   author={Read, C. J.},
   title={Operators having no non‐trivial closed invariant subspaces on $\ell_1$: a step further},
   journal={Proc. Lond. Math. Soc.},
   volume={118},
   date={2019},
   number={3},
   pages={649--674},
}

%

\bib{G}{article}{
   author={Grivaux, Sophie},
   title={Hypercyclic operators, mixing operators, and the bounded steps
   problem},
   journal={J. Operator Theory},
   volume={54},
   date={2005},
   number={1},
   pages={147--168},
   issn={0379-4024},
   review={\MR{2168865}},
}

\bib{RR1}{article}{
   author={Grivaux, Sophie},
   author={Roginskaya, Maria},
   title={On Read's type operators on Hilbert spaces},
   journal={Int. Math. Res. Not. IMRN},
   date={2008},
   pages={Art. ID rnn 083, 42},
   issn={1073-7928},
   review={\MR{2439560}},
   
}

\bib{RR2}{article}{
   author={Grivaux, Sophie},
   author={Roginskaya, Maria},
   title={A general approach to Read's type constructions of operators
   without non-trivial invariant closed subspaces},
   journal={Proc. Lond. Math. Soc. (3)},
   volume={109},
   date={2014},
   number={3},
   pages={596--652},
   issn={0024-6115},
   review={\MR{3260288}},
   
}

\bib{GMM}{article}{
   author={Grivaux, Sophie},
   author={Matheron, \'{E}tienne},
   author={Menet, Quentin},
   title={Linear dynamical systems on Hilbert spaces: typical properties and explicit examples},
   journal={to appear in Mem. Amer. Math. Soc, available at http://front.math.ucdavis.edu/1703.01854},
   date={2018},   
}

\bib{GEP}{book}{
   author={Grosse-Erdmann, Karl-G.},
   author={Peris Manguillot, Alfredo},
   title={Linear chaos},
   series={Universitext},
   publisher={Springer, London},
   date={2011},
   pages={xii+386},
   isbn={978-1-4471-2169-5},
   review={\MR{2919812}},
   
}

\bib{LS}{article}{
   author={Lindstr\"{o}m, Mikael},
   author={Schl\"{u}chtermann, Georg},
   title={Lomonosov's techniques and Burnside's theorem},
   journal={Canad. Math. Bull.},
   volume={43},
   date={2000},
   number={1},
   pages={87--89},
   issn={0008-4395},
   review={\MR{1749953}},
   
}

\bib{L1}{article}{
   author={Lomonosov, V. I.},
   title={Invariant subspaces of the family of operators that commute with a
   completely continuous operator},
   language={Russian},
   journal={Funkcional. Anal. i Prilo\v{z}en.},
   volume={7},
   date={1973},
   number={3},
   pages={55--56},
   issn={0374-1990},
   review={\MR{0420305}},
}

\bib{L2}{article}{
   author={Lomonosov, V.},
   title={An extension of Burnside's theorem to infinite-dimensional spaces},
   journal={Israel J. Math.},
   volume={75},
   date={1991},
   number={2-3},
   pages={329--339},
   issn={0021-2172},
   review={\MR{1164597}},
   
}

\bib{L3}{article}{
   author={Lomonosov, V.},
   title={Positive functionals on general operator algebras},
   journal={J. Math. Anal. Appl.},
   volume={245},
   date={2000},
   number={1},
   pages={221--224},
   issn={0022-247X},
   review={\MR{1756586}},
   
}


\bib{RaRo}{book}{
   author={Radjavi, Heydar},
   author={Rosenthal, Peter},
   title={Invariant subspaces},
   edition={2},
   publisher={Dover Publications, Inc., Mineola, NY},
   date={2003},
   pages={xii+248},
   isbn={0-486-42822-2},
   review={\MR{2003221}},
}

\bib{R1}{article}{
   author={Read, C. J.},
   title={A solution to the invariant subspace problem},
   journal={Bull. London Math. Soc.},
   volume={16},
   date={1984},
   number={4},
   pages={337--401},
   issn={0024-6093},
   review={\MR{749447}},
}

\bib{R2}{article}{
   author={Read, C. J.},
   title={A solution to the invariant subspace problem on the space $l_1$},
   journal={Bull. London Math. Soc.},
   volume={17},
   date={1985},
   number={4},
   pages={305--317},
   issn={0024-6093},
   review={\MR{806634}},
   
}

\bib{R3}{article}{
   author={Read, C. J.},
   title={A short proof concerning the invariant subspace problem},
   journal={J. London Math. Soc. (2)},
   volume={34},
   date={1986},
   number={2},
   pages={335--348},
   issn={0024-6107},
   review={\MR{856516}},
   
}

\bib{R6}{article}{
   author={Read, C. J.},
   title={The invariant subspace problem for a class of Banach spaces. II.
   Hypercyclic operators},
   journal={Israel J. Math.},
   volume={63},
   date={1988},
   number={1},
   pages={1--40},
   issn={0021-2172},
   review={\MR{959046}},
   
}

\bib{R5}{article}{
   author={Read, C. J.},
   title={The invariant subspace problem on some Banach spaces with
   separable dual},
   journal={Proc. London Math. Soc. (3)},
   volume={58},
   date={1989},
   number={3},
   pages={583--607},
   issn={0024-6115},
   review={\MR{988104}},
   
}

\bib{R4}{article}{
   author={Read, C. J.},
   title={Quasinilpotent operators and the invariant subspace problem},
   journal={J. London Math. Soc. (2)},
   volume={56},
   date={1997},
   number={3},
   pages={595--606},
   issn={0024-6107},
   review={\MR{1610408}},
   
}

  \end{biblist}
\end{bibdiv}

\end{document}